\g@addto@macro\th@plain{\thm@headpunct{}}
\newtheorem{theorem}{Theorem}[section]
\newtheorem{lemma}[theorem]{Lemma}
\newcommand{\xx}{ {\textbf x} }
\newcommand{\ab}{ {\textbf a} }
\newcommand{\bb}{ {\textbf b} }
\newcommand{\cb}{ {\textbf c} }
\newcommand{\yy}{ {\textbf y} }
\newcommand{\zz}{ {\textbf z} }
\newcommand{\ff}{ {\textbf f} }
\newcommand{\gb}{ {\textbf g} }
\newcommand{\ee}{ {\textbf e} }
\newcommand{\ub}{ {\textbf u} }
\newcommand{\vb}{ {\textbf v} }
\newcommand{\VV}{ \Omega }
\newcommand{\RR}{\mathbb{R}}
\newcommand{\KK}{\mathbb{K}}
\newcommand{\LL}{\mathbb{L}}
\newcommand{\PP}{\mathbb{P}}
\newcommand{\En}{\mathbb{E}}
\newcommand{\tr}{\mathrm{tr}\,}
\newcommand{\Trace}{\mathrm{Trace}\,}
\newcommand{\DDet}{\mathrm{Det}}
\providecommand{\scalar}[1]{\left\langle#1\right\rangle}
\title{The Matsumoto-Yor Property and Its Converse on Symmetric Cones}
\author[B. Ko\l{}odziejek]{Bartosz Ko\l{}odziejek}
\address{Faculty of Mathematics and Information Science\\Warsaw University of Technology\\Pl. Politechniki 1\\00-661 Warsaw, Poland}
\email{kolodziejekb@mini.pw.edu.pl}
\subjclass[2010]{Primary 62E10; Secondary 60E05}
\keywords{Matsumoto--Yor property; Generalized inverse Gaussian distributions; Wishart distributions; symmetric cones; Hua's identity; functional equation}
\begin{document}

\begin{abstract}
The Matsumoto--Yor (MY) property of the generalized inverse Gaussian and gamma distributions has many generalizations. As it was observed in (Letac and Weso{\l}owski in Ann Probab 28:1371--1383, 2000) the natural framework for the multivariate MY property is symmetric cones; however they prove their results for the cone of symmetric positive definite real matrices only.

In this paper, we prove the converse to the symmetric cone-variate MY property, which extends some earlier results. The smoothness assumption for the densities of respective variables is reduced to the continuity only. This enhancement was possible due to the new solution of a related functional equation for real functions defined on symmetric cones.
\end{abstract}

\maketitle

\section{Introduction}
\citet{MatYor01,MatYor03} have shown that if $X$ and $Y$ are independent random variables, $Y$ is gamma distributed with the shape parameter $p$ and the scale parameter $a$ and $X$ has the generalized inverse Gaussian distribution (GIG) with parameters $(-p,a,b)$, then the random variables $U=(X+Y)^{-1}$ and $V=X^{-1}-(X+Y)^{-1}$ are independent with respective distributions GIG with parameters $(-p,b,a)$ and gamma with parameters $p$ and $b$. 

Matsumoto and Yor asked about the converse theorem based on the independence of $U$ and $V$. Assume that $X$ and $Y$ are non-degenerate nonnegative independent random variables, such that $U$ and $V$ are independent. Does this imply that $X$ and $Y$ must follow GIG and gamma distributions, respectively?

A positive answer to this question was given by \citet{WesLe00}, with the use of Laplace transforms. In the same paper, both the Matsumoto-Yor property and its converse (with additional smoothness assumptions) were generalized to the cone $\VV_+$ of symmetric positive definite $(r,r)$ real matrices in the following way. For $p>(r-1)/2$ and $\ab, \bb\in\VV_+$, consider two independent random variables $X$ and $Y$ with following densities
\begin{align*}
\mu_{-p,\ab,\bb}(\mathrm{d}\xx)&=c_1 (\det\xx)^{-p-(r+1)/2}\exp\left(-\tr(\ab\cdot\xx)-\tr(\bb\cdot\xx^{-1})\right)I_{\VV_+}(\xx)\mathrm{d}\xx,\\
\gamma_{p,\ab}(\mathrm{d}\yy)&=c_2 (\det\yy)^{p-(r+1)/2}\exp(-\tr(\ab\cdot\yy))I_{\VV_+}(\yy)\mathrm{d}\yy.
\end{align*}
The distribution of $X$ is the GIG with parameters $(-p, \ab, \bb)$ and the distribution of $Y$ is the Wishart distribution with shape parameter $p$ and scale parameter $\ab$. Letac and Weso{\l}owski have shown that if $X$ and $Y$ are as above, then $(U,V)$ has distribution $\mu_{-p,\bb,\ab}\otimes \gamma_{p,\bb}$. As was observed by the authors, the natural framework for Matsumoto--Yor property is symmetric cones. Statement of a symmetric cone version of Matsumoto--Yor property is given in Sect. \ref{secProb}.

In this paper, we give a new proof of the converse result of the Matsumoto--Yor property, when $X$ and $Y$ take values in any irreducible symmetric cone. The smoothness assumption is reduced from $C^2$ densities in \citep{WesLe00} and differentiability in \citep{Wes02} to the continuity only. A new solution of a related functional equation on symmetric cones (see Theorem \ref{th1}) was found under the assumption of continuity of respective functions with the use of the corresponding univariate result due to \citet{Wes022}. Similar reduction in regularity assumptions was recently performed in the density version of Lukacs--Olkin--Rubin in \citep{KoloJOTP}.

It is worth mentioning several related one-dimensional results \citep{Jap04,KouVal12} as well as results for random matrices \citep{LeSym99,Kou12}.

While solving the functional equation, we use Hua's identity, which allows to write the inverse of $V=X^{-1}-(X+Y)^{-1}$ in a very convenient form:
\begin{align}\label{huavp}
V^{-1}=X+X\cdot Y^{-1}\cdot X.
\end{align}
Hua's identity has already proved to be useful in some problems related to GIG and Wishart distributions -- see \citep{Ber95}, where it was used to analyze some random continued fractions on symmetric cones. 

The paper is organized as follows. We start in the next section with some basic definitions and theorems regarding analysis on symmetric cones. In Sect. \ref{secProb} we define the GIG and Wishart distributions and state the Matsumoto-yYor property on symmetric cones. 
A core of the proof of the converse to the Matsumoto-yYor property is a solution of some functional equation for real functions with arguments from the cone. Section \ref{funeq} is devoted to analysis of this functional equation. The statement and the proof of the main result are given in Sect. \ref{secMY}. Finally, in Sect. \ref{ddd} we give some remarks regarding the MY property on matrices of different dimensions and related functional equation.

\section{Symmetric Cones}
In this section, we give a short introduction to the theory of symmetric cones. For further details, we refer to \citep{FaKo1994}. 

A \textit{Euclidean Jordan algebra} is a Euclidean space $\En$ (endowed with the scalar product denoted by $\scalar{\xx,\yy}$) equipped with a bilinear mapping (product)
\begin{align*}
\En\times\En \ni \left(\xx,\yy\right)\mapsto \xx\yy\in\En
\end{align*}
and a neutral element $\ee$ in $\En$ such that for all $\xx$, $\yy$, $\zz$ in $\En$:
\begin{itemize}
	\item $\xx\yy=\yy\xx$, 
	\item $\xx(\xx^2\yy)=\xx^2(\xx\yy)$,
	\item $\xx\ee=\xx$,
	\item $\scalar{\xx,\yy\zz}=\scalar{\xx\yy,\zz}$.
\end{itemize}
For $\xx\in\En$ let $\LL(\xx)\colon \En\to\En$ be the linear map defined by
\begin{align*}
\LL(\xx)\yy=\xx\yy,
\end{align*}
and define 
\begin{align*}
\PP(\xx)=2\LL^2(\xx)-\LL\left(\xx^2\right).
\end{align*} 
Let $\mathrm{End}(\En)$ denote the space of endomorphisms of $\En$. The map $\PP\colon \En\mapsto \mathrm{End}(\En)$ is called the \emph{quadratic representation} of $\En$.

An element $\xx$ is said to be \emph{invertible} if there exists an element $\yy$ in $\En$ such that $\LL(\xx)\yy=\ee$. Then, $\yy$ is called the \emph{inverse of} $\xx$ and it is denoted by $\yy=\xx^{-1}$. Note that the inverse of $\xx$ is unique. It can be shown that $\xx$ is invertible if and only if $\PP(\xx)$ is invertible, and in this case, $\left(\PP(\xx)\right)^{-1} =\PP\left(\xx^{-1}\right)$.

A Euclidean Jordan algebra $\En$ is said to be \emph{simple} if it is not a \mbox{Cartesian} product of two Euclidean Jordan algebras of positive dimensions. Up to linear isomorphism, there are only five kinds of Euclidean simple Jordan algebras. Let $\mathbb{K}$ denote either the real numbers $\RR$, the complex ones $\mathbb{C}$, the quaternions $\mathbb{H}$ or the octonions $\mathbb{O}$. Let us write $S_r(\mathbb{K})$ for the space of $r\times r$ Hermitian matrices valued in $\mathbb{K}$, endowed with the Euclidean structure $\scalar{\xx,\yy}=\Trace(\xx\cdot\bar{\yy})$ and with the Jordan product
\begin{align*}
\xx\yy=\tfrac{1}{2}(\xx\cdot\yy+\yy\cdot\xx),
\end{align*}
where $\xx\cdot\yy$ denotes the ordinary product of matrices and $\bar{\yy}$ is the conjugate of $\yy$. Then $S_r(\RR)$, $r\geq 1$, $S_r(\mathbb{C})$, $r\geq 2$, $S_r(\mathbb{H})$, $r\geq 2$, and the exceptional $S_3(\mathbb{O})$ are the first four kinds of Euclidean simple Jordan algebras. Note that in this case if $\KK\neq \mathbb{O}$, then
\begin{align*}
\PP(\yy)\xx=\yy\cdot\xx\cdot\yy.
\end{align*}
The fifth kind is the Euclidean space $\RR^{n+1}$, $n\geq 2$, with the Jordan product
\begin{align}\label{scL}\begin{split}
\left(x_0,x_1,\dots, x_n\right)\left(y_0,y_1,\ldots,y_n\right) =\left(\sum_{i=0}^n x_i y_i,x_0y_1+y_0x_1,\ldots,x_0y_n+y_0x_n\right).
\end{split}
\end{align}
To each Euclidean simple Jordan algebra, one can attach the set $\bar{\VV}$ of Jordan squares
\begin{align*}
\bar{\VV}=\left\{\xx\in\En\colon\mbox{ there exists }\yy\mbox{ in }\En\mbox{ such that }\xx=\yy^2 \right\}.
\end{align*}
The interior $\VV$ is a \emph{symmetric cone}.
Moreover, $\VV$ is \emph{irreducible}, i.e., it is not the Cartesian product of two convex cones. One can prove that an open convex cone is symmetric and irreducible if and only if it is the symmetric cone $\VV$ of some Euclidean simple Jordan algebra. Each simple Jordan algebra corresponds to a symmetric cone; hence, there exists up to linear isomorphism also only five kinds of symmetric cones. The cone corresponding to the Euclidean Jordan algebra $\RR^{n+1}$ equipped with Jordan product \eqref{scL} is called the Lorentz cone. 

We will now introduce a very useful decomposition in $\En$, called the \emph{spectral decomposition}. An element $\cb\in\En$ is said to be a \emph{primitive idempotent} if $\cb\cb=\cb\neq 0$ and if $\cb$ is not a sum of two non-null idempotents. A \emph{complete system of primitive orthogonal idempotents} is a set $\left(\cb_1,\ldots,\cb_r\right)$ such that
\begin{align*}
\sum_{i=1}^r \cb_i=\ee\quad\mbox{and}\quad\cb_i\cb_j=\delta_{ij}\cb_i\quad\mbox{for } 1\leq i\leq j\leq r.
\end{align*}
The size $r$ of such system is a constant called the \emph{rank} of $\En$. Any element $\xx$ of a Euclidean simple Jordan algebra can be written as $\xx=\sum_{i=1}^r\lambda_i\cb_i$ for some complete system of primitive orthogonal idempotents $\left(\cb_1,\ldots,\cb_r\right)$. The real numbers $\lambda_i$, $i=1,\dots,r$ are the \emph{eigenvalues} of $\xx$. One can then define the \emph{trace} and the \emph{determinant} of $\xx$ by, respectively, $\tr\xx=\sum_{i=1}^r\lambda_i$ and $\det\xx=\prod_{i=1}^r\lambda_i$. An element $\xx\in\En$ belongs to $\VV$ if and only if all its eigenvalues are strictly positive. 

Note that up to a multiplicative constant, $\tr(\xx\yy)$ is the only scalar product of $\En$ which makes $\Omega$ self dual. Henceforth we assume that $\VV$ is an irreducible cone and that corresponding Jordan algebra $\En$ is equipped with canonical scalar product $\scalar{\xx,\yy}=\tr(\xx\yy)$.

The rank $r$ and $\dim\VV$ of irreducible symmetric cone are connected through the relation
\begin{align*}
\dim\VV=r+\frac{d r(r-1)}{2},
\end{align*}
where $d$ is an integer called the \emph{Peirce constant}.

The important property of the determinant is that
\begin{align}
\det\left(\PP(\xx)\yy\right)=(\det\xx)^2 \det\yy,\quad (\xx,\yy)\in\VV^2. \label{wCwC}
\end{align}
It turns out that \eqref{wCwC} characterizes determinant -- see Lemma \ref{Pexider} below. Moreover (see \cite[Proposition II.4.2]{FaKo1994})
\begin{align}\label{dddet}
\DDet\left(\PP(\xx)\right)=(\det\xx)^{2\dim\VV/r},
\end{align}
where $\DDet$ denotes the determinant in the space of endomorphisms on $\VV$.

In the proof of our main theorem we will need the following identity (called Hua's identity - see \cite[Exercise 5c, p.39]{FaKo1994})
\begin{align}\label{Hua}
\ab^{-1}-(\ab+\bb)^{-1}=(\ab+\PP(\ab)\bb^{-1})^{-1}
\end{align}
when $\ab\in\VV$, $\bb\in\En$ are such that $\bb$, $\ab+\bb$ and $\ab+\PP(\ab)\bb^{-1}$ are invertible. Note that if $\ab, \bb\in\VV$, then $\ab^{-1}-(\ab+\bb)^{-1}\in\VV$. For the cone $\Omega_+$ of symmetric positive definite real matrices, Hua's identity takes the form given in \eqref{huavp}.

\section{Wishart and GIG Distributions}\label{secProb}
The Wishart distribution $\gamma_{p,\ab}$ in $\bar{\VV}$ is defined for any $\ab\in\VV$ and any $p$ in the set
\begin{align*}
\Lambda=\{0,d/2,d,\ldots,d(r-1)/2\}\cup(d(r-1)/2,\infty)
\end{align*}
by its Laplace transform
\begin{align*}
\int_{\bar{\VV}} \exp(-\scalar{\sigma,\yy})\gamma_{p,\ab}(\mathrm{d}\yy)=\left(\frac{\det\ab}{\det\left(\ab+\sigma\right)}\right)^p,
\end{align*}
which holds for any $\sigma+\ab\in\VV$. If $p>\dim\VV/r-1$, then $\gamma_{p,\ab}$ is absolutely continuous with respect to the Lebesgue measure and has the density
\begin{align*}
\gamma_{p,\ab}(\mathrm{d}\xx)=\frac{(\det\ab)^p}{\Gamma_\VV(p)} (\det\xx)^{p-\dim\VV/r}\mathrm{e}^{-\scalar{\ab,\xx}}I_\VV(\xx)\,\mathrm{d}\xx,\quad\xx\in\VV,
\end{align*}
where $\Gamma_\VV$ is the gamma function of the symmetric cone $\VV$ (see \cite[p.124]{FaKo1994}).

The absolutely continuous generalized inverse Gaussian distribution $\mu_{p,\ab,\bb}$ on $\VV$ is defined for $\ab,\bb\in\VV$ and $p\in\RR$ by its density
\begin{align*}
\mu_{p,a,b}(\mathrm{d}\xx)=\frac{1}{K_p(\ab,\bb)} (\det\xx)^{p-\dim\VV/r}e^{-\scalar{\ab,\xx}-\scalar{\bb,\xx^{-1}}}I_\VV(\xx)\,\mathrm{d}\xx,\quad\xx\in\VV,
\end{align*}
where $K_p(\ab,\bb)$ is a normalizing constant.

In \citep{WesLe00}, Theorem \ref{TXXX} was proved in the special case of the cone of symmetric positive definite real matrices $\VV_+$. As it was observed by the authors, symmetric cones are the natural framework for considering the Matsumoto--Yor property. We state the following theorem without a proof as it only mimics the argument for $\VV_+$. The original proof relies on the properties of Bessel-like functions ($K_p(\ab,\bb)$) introduced in \citep{Herz55}, which retain their usual properties in the symmetric cone setting.
\begin{theorem}\label{TXXX}
Let $p\in\Lambda$ and $\ab$ and $\bb$ in irreducible symmetric cone $\VV$. Let $X$ and $Y$ be independent random variables in $\VV$ and $\bar{\VV}$ with respective distributions $\mu_{-p,\ab,\bb}$ and $\gamma_{p,\ab}$. Then random variables $U=(X+Y)^{-1}$ and $V=X^{-1}-(X+Y)^{-1}$ are independent with respective distributions $\mu_{-p,\bb,\ab}$ and $\gamma_{p,\bb}$.
\end{theorem}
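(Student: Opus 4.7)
The plan is to carry out the change of variables $(X, Y) \mapsto (U, V)$ directly, exactly as in the $\VV_+$ case. I would first treat the absolutely continuous regime $p > \dim\VV/r - 1$ and afterwards extend to all $p\in\Lambda$ by uniqueness of Laplace transforms and continuity in the parameter.

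\emph{Inverting the transformation.} The map $(X, Y) \mapsto (U, V) = ((X+Y)^{-1}, X^{-1} - (X+Y)^{-1})$ is a diffeomorphism of $\VV \times \VV$ onto itself with inverse $X = (U+V)^{-1}$ and, by Hua's identity \eqref{Hua}, $Y = U^{-1} - (U+V)^{-1} = (U + \PP(U)V^{-1})^{-1}$. Next I decompose the forward map as $(X, Y) \mapsto (X^{-1}, X+Y) \mapsto (X^{-1}-(X+Y)^{-1}, (X+Y)^{-1})$; the translations contribute Jacobian $1$, while the inversion $\zz \mapsto \zz^{-1}$ on $\VV$ has Jacobian $(\det\zz)^{-2\dim\VV/r}$ by \eqref{dddet}. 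Substituting $\det X = \det(U+V)^{-1}$ and $\det(X+Y) = (\det U)^{-1}$ gives
\[
dx\,dy = (\det(U+V))^{-2\dim\VV/r}(\det U)^{-2\dim\VV/r}\,du\,dv.
\]

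\emph{Computing $\det Y$.} Using the Jordan identity $\PP(U)U^{-1}=U$, I rewrite $Y^{-1} = U + \PP(U)V^{-1} = \PP(U)(U^{-1}+V^{-1})$, so by \eqref{wCwC}
\[
\det Y^{-1} = (\det U)^{2}\,\det(U^{-1}+V^{-1}).
\]
To evaluate $\det(U^{-1}+V^{-1})$, I apply the structure-group element $\PP(U^{-1/2})$, which maps $U$ to $\ee$ and multiplies determinants by $(\det U)^{-1}$, and then use the spectral decomposition of the transformed $V$ to reduce the identity to the scalar computation $(1+\lambda_i)/\lambda_i = (1+\lambda_i^{-1})$. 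This yields $\det(U^{-1}+V^{-1}) = \det(U+V)/[\det U \cdot \det V]$, and therefore $\det Y = \det V/[\det U \cdot \det(U+V)]$.

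\emph{Assembling the joint density.} Substituting the above into the joint density $f_{X,Y}(x,y) \propto (\det x)^{-p-\dim\VV/r}(\det y)^{p-\dim\VV/r}e^{-\scalar{\ab,x+y}-\scalar{\bb,x^{-1}}}$ and using $\scalar{\ab,X+Y}=\scalar{\ab,U^{-1}}$ and $\scalar{\bb,X^{-1}}=\scalar{\bb,U+V}$, I check that all powers of $\det(U+V)$ cancel, leaving
\[
f_{U,V}(u,v) \propto (\det u)^{-p-\dim\VV/r}e^{-\scalar{\bb,u}-\scalar{\ab,u^{-1}}} \cdot (\det v)^{p-\dim\VV/r}e^{-\scalar{\bb,v}}.
\]
This factorizes as the product of the $\mu_{-p,\bb,\ab}$ and $\gamma_{p,\bb}$ densities, proving both the independence and the claimed marginals. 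Finally, for the discrete values $p \in \{0,d/2,\ldots,d(r-1)/2\}$, where $\gamma_{p,\ab}$ is singular, I would verify the identity of the Laplace transform of $(U,V)$ with the product of the Laplace transforms of $\mu_{-p,\bb,\ab}$ and $\gamma_{p,\bb}$ by extending the computation through the integral representation, using that the joint Laplace transform is analytic in $p$ on a common domain.

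The main obstacle is the determinant identity $\det(U^{-1}+V^{-1}) = \det(U+V)/[\det U \cdot \det V]$, which is obvious for real positive definite matrices but requires the equivariance of $\det$ under the quadratic representation in the general Jordan setting. Once this is established the rest is bookkeeping of powers of $\det(U+V)$ and $\det U$, and the overall computation is essentially parallel to that of \cite{WesLe00} for $\VV_+$.
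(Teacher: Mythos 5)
Your change-of-variables computation is sound on the absolutely continuous part of $\Lambda$: the involution $\Psi=\Psi^{-1}$, the Jacobian $(\det\ub\det(\ub+\vb))^{-2\dim\VV/r}$ obtained from \eqref{dddet}, the determinant identity $\det(\ub^{-1}+\vb^{-1})=\det(\ub+\vb)/(\det\ub\,\det\vb)$ (which does follow from \eqref{wCwC}, $\PP(\ub)\ub^{-1}=\ub$ and the spectral decomposition exactly as you sketch), and the cancellation of all powers of $\det(\ub+\vb)$ all check out, and for $p>\dim\VV/r-1$ they produce precisely the factorized density of $\mu_{-p,\bb,\ab}\otimes\gamma_{p,\bb}$. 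Note that the paper gives no proof of this theorem at all; it defers to the argument of Letac and Weso{\l}owski for $\VV_+$, which is organized around identities for the Bessel-like normalizing functions $K_p(\ab,\bb)$ of Herz rather than around an explicit Jacobian, so on the open part of the Gindikin set your route is a legitimate and arguably more transparent alternative.

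The genuine gap is the extension to the discrete part $\{0,d/2,\ldots,d(r-1)/2\}$ of $\Lambda$, where $\gamma_{p,\ab}$ is singular and the density computation does not apply. The mechanism you propose, analyticity of the joint Laplace transform in $p$, does not work as stated: since $\dim\VV/r-1=d(r-1)/2$, your computation covers exactly the interval $(d(r-1)/2,\infty)$, and the points $0,d/2,\ldots,d(r-2)/2$ are isolated in $\Lambda$, so they are reachable neither by analytic continuation of a function defined only on $\Lambda$ nor by weak limits taken within $\Lambda$ (only the endpoint $d(r-1)/2$ arises as such a limit, via convergence of Laplace transforms and continuity of $\Psi$ on $\VV\times\bar{\VV}$). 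Moreover, for singular $p$ the quantity $E[h(U,V)]$ cannot be reduced to the Laplace transform of $\gamma_{p,\ab}$ alone, since the integrand is not exponential in $\yy$; one needs either the $K_p$-identities underlying the Letac--Weso{\l}owski proof or a direct computation on the boundary orbits carrying the singular Wishart. You should either restrict the statement to $p>\dim\VV/r-1$ or supply a separate argument for the degenerate values (the case $p=0$ being trivial, since then $Y=V=0$ almost surely and $U=X^{-1}$).
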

 
\section{Functional Equations}\label{funeq}
At the beginning of this section we state three results that will be useful in the proof of the main technical result - Theorem \ref{th1}. The first one regards regular additive functions (see \citep{Kucz09}) on symmetric cone. 

\begin{lemma}[Additive Cauchy functional equation]\label{Additive}
Let $f\colon \VV\to\RR$ be a measurable function such that
\begin{align*}
f(\xx)+f(\yy)=f(\xx+\yy),\quad (\xx,\yy)\in\VV^2.
\end{align*}
Then there exists $\ff\in\En$ such that $f(\xx)=\scalar{\ff,\xx}$ for any $\xx\in\VV$.
\end{lemma}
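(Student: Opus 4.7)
The plan is to reduce the problem to the classical one-dimensional Cauchy equation by restricting to rays, then to extend $f$ from $\VV$ to the whole Jordan algebra $\En$ and invoke Riesz representation on the finite-dimensional inner-product space $\En$.

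For the first step, I would fix an arbitrary $\xx_0\in\VV$ and consider $g_{\xx_0}(t):=f(t\xx_0)$ for $t>0$. Since $\VV$ is a cone this is well-defined, and the hypothesis on $f$ together with continuity of $f$ shows that $g_{\xx_0}$ is a continuous additive function on $(0,\infty)$. The classical univariate result (see \cite{Kucz09}) then gives $g_{\xx_0}(t)=t\,g_{\xx_0}(1)$, so $f(t\xx_0)=t\,f(\xx_0)$ for all $t>0$ and $\xx_0\in\VV$; that is, $f$ is positively homogeneous on $\VV$.

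Next I would extend $f$ to all of $\En$. Because $\VV$ is a nonempty open convex cone in $\En$, it spans $\En$, and as a convex cone it actually satisfies $\VV-\VV=\En$ (given $\xx_0\in\VV$ and $\ub\in\En$, openness of $\VV$ yields $\lambda>0$ with $\xx_0+\lambda^{-1}\ub\in\VV$, and then $\ub=\lambda(\xx_0+\lambda^{-1}\ub)-\lambda\xx_0$). For $\ub\in\En$ written as $\ub=\xx-\yy$ with $\xx,\yy\in\VV$, set $\tilde f(\ub):=f(\xx)-f(\yy)$. Well-definedness is the one point that needs verification: if $\xx-\yy=\xx'-\yy'$ with all four vectors in $\VV$, then $\xx+\yy'=\xx'+\yy\in\VV$, and the assumed additivity of $f$ gives $f(\xx)+f(\yy')=f(\xx')+f(\yy)$, whence $f(\xx)-f(\yy)=f(\xx')-f(\yy')$.

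Finally, I would check that $\tilde f$ is $\RR$-linear on $\En$: additivity transfers directly from $f$; positive scalar homogeneity comes from the first step; and the identity $\tilde f(-\ub)=-\tilde f(\ub)$ follows immediately from the definition, covering negative scalars. A linear functional on the finite-dimensional Euclidean space $(\En,\scalar{\cdot,\cdot})$ is represented by an inner product with some $\ff\in\En$, and this gives $f(\xx)=\scalar{\ff,\xx}$ on $\VV$. No step is genuinely hard here; the only place that demands a moment of care is the well-definedness of the extension $\tilde f$, which is precisely where the hypothesis $f(\xx)+f(\yy)=f(\xx+\yy)$ is consumed.
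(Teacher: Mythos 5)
Your proof is correct and complete. Note that the paper itself does not prove this lemma: it only remarks that an elementary proof in the symmetric cone setting can be found in the cited reference [Ko\l{}odziejek 2013], so there is no in-paper argument to compare against. Your route --- restriction to rays to get positive homogeneity from the one-dimensional continuous Cauchy equation, the extension to $\En=\VV-\VV$ using openness of the cone, verification of well-definedness via $\xx+\yy'=\xx'+\yy$, and Riesz representation on the finite-dimensional Euclidean space $\En$ --- is exactly the standard elementary argument one would expect that reference to contain, and every step you flag as needing care (well-definedness of $\tilde f$, full $\RR$-homogeneity from positive homogeneity plus oddness) is handled correctly. The only micro-detail left implicit is that $\tilde f$ restricted to $\VV$ agrees with $f$, which follows instantly from writing $\xx=2\xx-\xx$ and additivity.
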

An elementary proof of this theorem may be found in \citep{Kolo2013}.
The following lemma was recently proved in \citep{wC2013}.
\begin{lemma}[Logarithmic Pexider functional equation]\label{Pexider}
Let $f_1$, $f_2$, $f_3\colon \VV\to\RR$ be measurable functions such that
\begin{align*}
f_1(\xx)+f_2(\yy)=f_3\left(\PP\left(\xx^{1/2}\right)\yy\right),\quad (\xx,\yy)\in\VV^2.
\end{align*}
Then there exist a constant $q\in\RR$ and constants $\gamma_1$, $\gamma_2\in\RR$ such that for all $\xx\in\VV$,
\begin{align*}
f_1(\xx) & =q\log\det\xx+\gamma_1, \\
f_2(\xx) & =q\log\det\xx+\gamma_2, \\
f_3(\xx) & =q\log\det\xx+\gamma_1+\gamma_2.
\end{align*}
\end{lemma}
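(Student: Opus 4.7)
My plan is to (i) reduce the three-function Pexider equation to a single functional equation, (ii) analyze its behavior on scalars, (iii) solve it on a Jordan frame, and (iv) symmetrize using the isotropies of the cone. Setting $\yy = \ee$ and using $\PP(\xx^{1/2})\ee = \xx$ gives $f_3(\xx) = f_1(\xx) + f_2(\ee)$; setting $\xx = \ee$ (so $\PP(\ee^{1/2}) = \mathrm{id}$) gives $f_3(\yy) = f_1(\ee) + f_2(\yy)$. Subtracting, $f_1(\xx) - f_1(\ee) = f_2(\xx) - f_2(\ee)$. Writing $\gamma_i := f_i(\ee)$ and $g := f_1 - \gamma_1$, the three unknowns collapse to a single continuous $g \colon \VV \to \RR$ with $g(\ee) = 0$ satisfying
\[
g(\xx) + g(\yy) = g(\PP(\xx^{1/2})\yy), \qquad (\xx,\yy) \in \VV^2.
\]
Because $\PP$ is quadratic, $\PP((t\ee)^{1/2}) = t\cdot\mathrm{id}$, so substituting $\xx = t\ee$ yields $g(t\yy) = g(\yy) + g(t\ee)$. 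With $\yy = s\ee$ this gives $h(ts) = h(t) + h(s)$ for $h(t) := g(t\ee)$, whence continuity forces $h(t) = q\log t$ for some $q \in \RR$, and $g(t\yy) = q\log t + g(\yy)$.

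I would then fix a complete Jordan frame $(\cb_1,\ldots,\cb_r)$ and specialize to $\xx = \sum_i \lambda_i \cb_i$, $\yy = \sum_i \mu_i \cb_i$ with $\lambda_i,\mu_i > 0$. The orthogonality $\cb_i\cb_j = \delta_{ij}\cb_i$ gives $\xx^{1/2}\yy = \sum_i \sqrt{\lambda_i}\mu_i\cb_i$ and $\PP(\xx^{1/2})\yy = 2\xx^{1/2}(\xx^{1/2}\yy) - \xx\yy = \sum_i \lambda_i\mu_i\cb_i$. Defining $\phi(\lambda) := g(\sum_i \lambda_i\cb_i)$ on $\RR_+^r$, the equation becomes $\phi(\lambda) + \phi(\mu) = \phi(\lambda_1\mu_1,\ldots,\lambda_r\mu_r)$. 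Composing with $\exp$ coordinate by coordinate reduces the problem to a continuous additive Cauchy equation on $\RR^r$, whose solutions are linear (by iteration of the one-dimensional case underlying Lemma \ref{Additive}); hence $\phi(\lambda) = \sum_i q_i \log\lambda_i$ for constants $q_i \in \RR$ a priori depending on the frame, with $\sum_i q_i = q$ read off from $\phi(t,\ldots,t) = h(t) = q\log t$.

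The hardest step will be to force all $q_i$ to be equal (necessarily to $q/r$), which is equivalent to invariance of $g$ under the Jordan algebra automorphisms that permute primitive idempotents inside a frame --- an invariance not directly implied by the equation. I would exploit the symmetry $g(\PP(\xx^{1/2})\yy) = g(\PP(\yy^{1/2})\xx)$, obtained by interchanging $\xx$ and $\yy$ in the reduced equation. Crucially, $\PP(\xx^{1/2})\yy$ and $\PP(\yy^{1/2})\xx$ are cospectral (in the Hermitian matrix model they are $C^{\ast}C$ and $CC^{\ast}$ for $C = \yy^{1/2}\xx^{1/2}$, which are conjugate via the orthogonal factor in the polar decomposition of $C$). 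The resulting identification of $g$-values on cospectral pairs, together with the scaling identity $g(t\yy) = q\log t + g(\yy)$ --- which reduces the remaining indeterminacy to the hypersurface $\{\det = 1\}$ --- and continuity, should collapse $\phi$ to a symmetric function of its arguments on every frame, hence $q_1 = \cdots = q_r$.

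Once $q_i = q/r$ for every $i$, the formula $\phi(\lambda) = (q/r)\log\det(\sum_i \lambda_i \cb_i)$ holds on the frame; since every $\xx \in \VV$ lies on some frame by the spectral theorem, $g(\xx) = (q/r)\log\det\xx$ on all of $\VV$, the constant being uniform across frames because it is pinned down by $g(t\ee) = q\log t$. Relabeling $q/r$ as $q$ and unwinding $f_1 = g + \gamma_1$, $f_2 = g + \gamma_2$, $f_3 = g + \gamma_1 + \gamma_2$ yields the stated closed forms.
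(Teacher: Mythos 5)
Note first that the paper does not prove this lemma at all: it is quoted from the reference \cite{wC2013}, so your attempt cannot be matched against an internal argument and has to stand on its own.

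Your reductions are correct up to the decisive point. Putting $\yy=\ee$ and $\xx=\ee$ does collapse the three unknowns to one continuous $g$ with $g(\xx)+g(\yy)=g(\PP(\xx^{1/2})\yy)$; the scalar substitution $\xx=t\ee$ correctly gives $g(t\yy)=q\log t+g(\yy)$; and on a fixed Jordan frame the computation $\PP(\xx^{1/2})\yy=\sum_i\lambda_i\mu_i\cb_i$ and the $r$-dimensional Cauchy equation legitimately yield $\phi(\lambda)=\sum_i q_i\log\lambda_i$ with $\sum_i q_i=q$. The genuine gap is exactly the step you flag as hardest, and the mechanism you propose for it does not close it. The functional equation identifies $g$-values only on the \emph{specific} cospectral pairs $\bigl(\PP(\xx^{1/2})\yy,\PP(\yy^{1/2})\xx\bigr)$, not on arbitrary cospectral pairs, and the pair you actually need --- $\sum_i\lambda_i\cb_i$ versus its coordinate permutation $\sum_i\lambda_{\sigma(i)}\cb_i$ --- is provably \emph{not} of this form in one step. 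In the rank-two real symmetric model, realizing $U=\mathrm{diag}(a,b)$ and $V=\mathrm{diag}(b,a)$ would force the polar factor of $C=\yy^{1/2}\xx^{1/2}$ to be the swap (or a quarter-turn), making $C=QU^{1/2}$ have eigenvalues $\pm(ab)^{1/4}$ or $\pm i(ab)^{1/4}$; such a $C$ is not a product of two positive definite matrices, so no admissible $(\xx,\yy)$ produces this pair. One therefore needs a chaining argument: show that the one-step relation reaches an open neighbourhood of the identity in the orbit of the stabilizer group $K$ of $\ee$, that iterating connects the whole $K$-orbit, and that coordinate permutations within a frame lie in that orbit. None of this is in your sketch, and the word ``should'' is doing all the work there.

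Two smaller but real defects in the same step: (i) the cospectrality of $\PP(\xx^{1/2})\yy$ and $\PP(\yy^{1/2})\xx$ is argued only in the Hermitian matrix models via $C^{\ast}C$ versus $CC^{\ast}$; this covers neither the Lorentz cone nor, honestly, $S_3(\mathbb{O})$, where octonionic ``polar decomposition'' is not available in the naive sense. The general statement is true (both elements have characteristic polynomial $\det(\xx)\det(\lambda\xx^{-1}-\yy)=\det(\yy)\det(\lambda\yy^{-1}-\xx)$), but it must be proved in the Jordan-algebraic language, not borrowed from matrix algebra. (ii) Even granting that $g$ is constant on each full cospectral class, you still need that every $\xx\in\VV$ is cospectral to an element of the form $(\det\xx)^{1/r}\,k(\ee)$ only through the determinant --- i.e., the passage from ``$\phi$ symmetric on every frame'' to ``$g=(q/r)\log\det$'' uses that a symmetric solution of $\phi(\lambda)+\phi(\mu)=\phi(\lambda\mu)$ with equal weights is $\frac{q}{r}\sum_i\log\lambda_i$; that part is fine once symmetry is established, so the entire burden rests on the unproved symmetrization. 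As written, the proposal is an incomplete proof of the lemma, though the architecture is salvageable.
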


The main technical result will rely on the following univariate result due to \citet{Wes022}.
\begin{theorem}\label{1dimwes}
Let $A$, $B$, $C$ and $D$ be locally integrable real functions defined on $(0,\infty)$ such that
\begin{align}\label{eq10}
g(x(x+y))-g(y(x+y))=\alpha(x)-\alpha(y),\quad (x,y)\in(0,\infty)^2.\quad
\end{align}
Then there exist real numbers $A$, $B$, $C$ and $D$ such that for any $x>0$, 
\begin{align*}
g(x)=Ax+B\log x+C,\quad \alpha(x)=Ax^2+B\log x+D.
\end{align*}
\end{theorem}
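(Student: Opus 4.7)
The plan is to first promote $g$ and $\alpha$ from locally integrable to smooth, then exploit partial derivatives of the equation to derive a ``diagonal'' identity linking $\alpha'$ and $g'$, and finally reduce to a single functional equation for $h := g'$ that forces $h(t)=A+B/t$. For regularization I would use standard arguments: integrating the functional equation in one variable and applying the smooth substitution $t=x(x+y)$ writes both sides in terms of continuous antiderivatives of $g$ and $\alpha$, and iterating propagates to $g,\alpha\in C^\infty((0,\infty))$ after modification on a null set.

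Assuming smoothness, differentiating the equation with respect to $x$ and to $y$ yields the linear system
\begin{align*}
(2x+y)g'(x(x+y)) - y\,g'(y(x+y)) &= \alpha'(x), \\
-x\,g'(x(x+y)) + (x+2y)g'(y(x+y)) &= \alpha'(y),
\end{align*}
whose determinant $2(x+y)^2$ is nonvanishing, so Cramer's rule gives explicit formulas for both unknowns in terms of $\alpha'(x)$ and $\alpha'(y)$. Setting $y=x$ collapses the first of these to the diagonal identity $\alpha'(x)=2x\,g'(2x^2)$. Substituting this back to eliminate $\alpha'$ produces a single functional equation for $h:=g'$,
\[(x+y)^2 h(x(x+y)) = x(x+2y)\,h(2x^2) + y^2 h(2y^2),\]
and the reparametrization $y=cx$, $t=x^2$ rewrites it as
\[(1+c)^2 h((1+c)t) = (1+2c)h(2t)+c^2 h(2c^2 t), \qquad c,t>0.\]

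The main obstacle is the final step: showing that this reduced equation forces $h(t)=A+B/t$. A conceptual check is a formal Mellin transform in $t$, which reduces the equation (on the support of $\tilde h(s)$) to the polynomial identity in $c$
\[2^s(1+c)^{2-s} = (1+2c)+c^{2-2s};\]
matching the leading asymptotics as $c\to 0^+$ and as $c\to\infty$ confines $s$ to $\{0,1\}$, exactly reproducing $h(t)=A+B/t$. To circumvent the integrability required by the Mellin argument, I would instead work elementarily: set $F(t):=th(t)$, note that $F(t)=At+B$ is an affine family of solutions to the transformed equation, subtract a specific $F_0$ so that the residual $\tilde F:=F-F_0$ vanishes at $t=1$ and $t=2$, and iterate the identities obtained from the transformed equation at several values of $c$ to propagate the vanishing of $\tilde F$ to all of $(0,\infty)$ by continuity. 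Once $h(t)=A+B/t$ is established, integration gives $g(t)=At+B\log t+C$, and the diagonal identity then yields $\alpha'(x)=2Ax+B/x$, whence $\alpha(x)=Ax^2+B\log x+D$.
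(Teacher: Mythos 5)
First, a point of reference: the paper does not prove this statement at all --- Theorem \ref{1dimwes} is quoted as an external result from \cite{Wes022}, so there is no in-paper proof to compare your argument against. Judged on its own, your reduction is correct as far as it goes: the linear system obtained by differentiating in $x$ and $y$ is right, its determinant is indeed $2(x+y)^2$, the diagonal identity $\alpha'(x)=2x\,g'(2x^2)$ follows by setting $y=x$, and the reduced equation $(1+c)^2h((1+c)t)=(1+2c)h(2t)+c^2h(2c^2t)$ is a correct consequence (I verified the Cramer elimination and the reparametrization). The regularization step is only sketched, but it follows a standard template for this type of equation and I will not press on it.

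The genuine gap is the final step, and you have essentially flagged it yourself. The Mellin computation is, as you say, only formal, and the proposed elementary substitute is not a proof: after subtracting an affine $F_0$ so that $\tilde F(1)=\tilde F(2)=0$, the specializations of the transformed equation (e.g.\ $t=1$ gives $\tilde F(1+c)=\tilde F(2c^2)/(2(1+c))$, and $t=1/2$ gives $\tilde F((1+c)/2)=\tilde F(c^2)/(2(1+c))$) only propagate the vanishing of $\tilde F$ along the orbit of $\{1,2\}$ under these maps, and you give no argument that this orbit is dense in $(0,\infty)$; without density, continuity cannot finish the job. As written, the conclusion $h(t)=A+B/t$ is therefore not established. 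The good news is that the gap closes easily with the machinery you already have: since your regularization step delivers $h\in C^\infty$, differentiate the reduced equation twice with respect to $c$ and set $c=1$. The zeroth- and first-order terms cancel identically and one is left with
\begin{align*}
h'(2t)+t\,h''(2t)=0,\qquad t>0,
\end{align*}
equivalently $\bigl(s^2h'(s)\bigr)'=0$, whose general solution is exactly $h(s)=A+B/s$. Integration then gives $g(t)=At+B\log t+C$, and the diagonal identity yields $\alpha'(x)=2Ax+B/x$, hence $\alpha(x)=Ax^2+B\log x+D$, as claimed. With that replacement for your last paragraph the argument is complete (modulo writing out the regularization carefully).
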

The following result then follows from Theorem \ref{1dimwes}.
\begin{theorem}\label{1dim}
Let $A$, $B$, $C$ and $D$ be locally integrable real functions defined on $(0,\infty)$ such that
\begin{align}\label{eq1}
A(x)+B(y)=C\left((x+y)^{-1}\right)+D\left(x^{-1}-(x+y)^{-1}\right),\quad (x,y)\in(0,\infty)^2.
\end{align}
Then there exist real numbers $p$, $f$, $g$ and $C_i$, $i=1,\ldots,4$, such that for any $x>0$, 
\begin{align*}
A(x) & =-p\log x+fx+gx^{-1}+C_1,\\
B(x) & =p\log x+fx+C_2,\\
C(x) & =-p\log x+g x+f x^{-1}+C_3,\\
D(x) & =p\log x+g x+C_4,
\end{align*}
and $C_1+C_2=C_3+C_4$.
\end{theorem}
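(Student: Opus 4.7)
The plan is to reduce equation \eqref{eq1} to the one already solved in Theorem \ref{1dimwes}, by exploiting the symmetry of \eqref{eq1} under $x\leftrightarrow y$. Since the term $C((x+y)^{-1})$ is symmetric in $x$ and $y$, interchanging $x$ and $y$ in \eqref{eq1} and subtracting eliminates $C$ and leaves
$$(A-B)(x)-(A-B)(y)=D\!\left(\frac{y}{x(x+y)}\right)-D\!\left(\frac{x}{y(x+y)}\right).$$
The key observation is that the substitution $x=1/(u(u+v))$, $y=1/(v(u+v))$ is a bijection of $(0,\infty)^{2}$ onto itself, under which $x+y=1/(uv)$ and
$$\frac{y}{x(x+y)}=u^{2},\qquad\frac{x}{y(x+y)}=v^{2}.$$
Setting $h(z):=(A-B)(1/z)$ and $\alpha(u):=D(u^{2})$ (both locally integrable, since the relevant changes of variables are diffeomorphisms of $(0,\infty)$), the subtracted equation becomes $h(u(u+v))-h(v(u+v))=\alpha(u)-\alpha(v)$, which is exactly \eqref{eq10}. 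Theorem \ref{1dimwes} then supplies constants $\beta,\gamma,c_{1},c_{2}$ with $h(z)=\beta z+\gamma\log z+c_{1}$ and $\alpha(u)=\beta u^{2}+\gamma\log u+c_{2}$. Unwinding yields $D(y)=\beta y+\tfrac{\gamma}{2}\log y+c_{2}$ and $(A-B)(x)=\beta/x-\gamma\log x+c_{1}$, which is the claimed form of $D$ (with $g=\beta$, $p=\gamma/2$) and determines $A-B$.

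To recover $A+B$, I would add \eqref{eq1} and its $x\leftrightarrow y$ version to obtain
$$(A+B)(x)+(A+B)(y)=2C((x+y)^{-1})+D\!\left(\tfrac{y}{x(x+y)}\right)+D\!\left(\tfrac{x}{y(x+y)}\right).$$
Substituting the now-explicit $D$ and using
$$\frac{y}{x(x+y)}+\frac{x}{y(x+y)}=\frac{1}{x}+\frac{1}{y}-\frac{2}{x+y},\qquad\frac{y}{x(x+y)}\cdot\frac{x}{y(x+y)}=\frac{1}{(x+y)^{2}},$$
the equation rearranges to $k(x)+k(y)=m(x+y)$, where $k(x):=(A+B)(x)-\beta/x$ is locally integrable and $m$ is an explicit function of $s=x+y$ involving $C(1/s)$. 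The classical Pexider--Cauchy theorem for locally integrable functions forces $k$ to be affine, say $k(x)=2fx+c_{3}$, and in turn pins down $A+B$.

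Combining with the previously determined $A-B$ gives $A(x)=-p\log x+fx+g/x+C_{1}$ and $B(x)=p\log x+fx+C_{2}$ for suitable constants. Inserting these expressions, together with $D$, into \eqref{eq1} and using that $g/x-gy/(x(x+y))$ collapses to $g/(x+y)$, the remaining equation reads $C((x+y)^{-1})=f(x+y)+g/(x+y)+p\log(x+y)+\text{const}$; writing $s=(x+y)^{-1}$ gives the claimed form of $C$, and tracking additive constants delivers the identity $C_{1}+C_{2}=C_{3}+C_{4}$ automatically. The only non-routine ingredient is the substitution $x=1/(u(u+v))$, $y=1/(v(u+v))$, which simultaneously turns the two nonlinear arguments of $D$ into $u^{2}$ and $v^{2}$ while recasting $A-B$ into a form amenable to Theorem \ref{1dimwes}; everything else is algebraic simplification and one-variable functional-equation theory.
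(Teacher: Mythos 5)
Your proof is correct, and its first half coincides with the paper's: both antisymmetrize \eqref{eq1} to eliminate $C$ and use the substitution $x=(u(u+v))^{-1}$, $y=(v(u+v))^{-1}$ to land on \eqref{eq10}, which via Theorem \ref{1dimwes} determines $D$ and $A-B$. You diverge in how the remaining unknowns are recovered. The paper exploits the fact that $(x,y)\mapsto\left((x+y)^{-1},x^{-1}-(x+y)^{-1}\right)$ is an involution: substituting it into \eqref{eq1} yields the dual equation $A\left((x+y)^{-1}\right)+B\left(x^{-1}-(x+y)^{-1}\right)=C(x)+D(y)$, to which the identical antisymmetrization applies, so Theorem \ref{1dimwes} is invoked a second time to determine $B$ and $C-D$; all four functions then follow, with a final back-substitution reconciling the two logarithmic coefficients ($B_1=B_2$). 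You instead symmetrize, insert the now-explicit $D$, and observe that $k(x):=(A+B)(x)-\beta/x$ satisfies $k(x)+k(y)=m(x+y)$, whence $k$ is affine by the measurable (locally integrable) Cauchy--Pexider theory, and $C$ drops out by back-substitution; the needed identities $\frac{y}{x(x+y)}+\frac{x}{y(x+y)}=\frac{1}{x}+\frac{1}{y}-\frac{2}{x+y}$ and $\frac{y}{x(x+y)}\cdot\frac{x}{y(x+y)}=(x+y)^{-2}$ check out. Both routes are sound: the paper's is more symmetric, using the single hard input twice in parallel, while yours trades the second application of Theorem \ref{1dimwes} for an elementary additive Cauchy equation plus some algebra, and has the side benefit that the constraint $C_1+C_2=C_3+C_4$ emerges automatically rather than by tracking constants through two applications of the theorem.
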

\begin{proof}
Denote $g_1(x)=A(x^{-1})-B(x^{-1})$ and $\alpha_1(x)=D(x^2)$. Interchange the roles of $x$ and $y$ in \eqref{eq1} and subtract from the original equation. Then
\begin{align*}
g_1\left(x^{-1}\right)-g_1\left(y^{-1}\right)=\alpha_1\left(\sqrt{\frac{y}{x(x+y)}}\right)-\alpha_1\left(\sqrt{\frac{x}{y(x+y)}}\right).
\end{align*}
Inserting $x=(u(u+v))^{-1}$ and $y=(v(u+v))^{-1}$, we arrive at \eqref{eq10} with $g$ and $\alpha$ replaced, respectively, with $g_1$ and $\alpha_1$.

Substituting $x\mapsto (x+y)^{-1}$ and $y\mapsto x^{-1}-(x+y)^{-1}$ in \eqref{eq1}, we obtain 
\begin{align*}
A\left((x+y)^{-1}\right)+B\left(x^{-1}-(x+y)^{-1}\right)=C(x)+D(y),\quad (x,y)\in(0,\infty)^2.
\end{align*}
As before, denoting $g_2(x)=C(x^{-1})-D(x^{-1})$ and $\alpha_2(x)=B(x^2)$ and subtracting the same equation with $x$ and $y$ interchanged, we see that \eqref{eq10} holds true for $g_2$ and $\alpha_2$ also. Functions $g_i$ and $\alpha_i$, $i=1,2$, are locally integrable, because for $g_1$ we have
$$\int_K|A(x^{-1})-B(x^{-1})|\,\mathrm{d}x=\int_{\phi(K)}|A(y)-B(y)|\frac{\mathrm{d}y}{y^2}\leq c\int_{\phi(K)}|A(y)-B(y)|\,\mathrm{d}y$$
for all compact sets $K\subset (0,\infty)$, where $\phi(K)$ is the (compact) image of $K$ under $\phi(x)=x^{-1}$. Since $A$ and $B$ were assumed to be locally integrable, we see that $g_1$ is locally integrable. Analogously, we proceed for $g_2$, $\alpha_1$ and $\alpha_2$.
Thus, by Theorem \ref{1dimwes}, we obtain (we borrow this notation from Theorem \ref{1dimwes}):
\begin{align*}
B(x) & =\alpha_2(\sqrt{x})=A_2x+B_2/2\log x+D_2, \\
D(x) & =\alpha_1(\sqrt{x})=A_1x+B_1/2\log x+D_1, \\
A(x) & = A(x)-B(x)+B(x)=g_1(x^{-1})+\alpha_2(\sqrt{x})=A_2x+A_1x^{-1}-(B_1-B_2/2)\log x+C_1+D_2, \\
C(x) & =C(x)-D(x)+D(x)=g_2(x^{-1})+\alpha_1(\sqrt{x})=A_1x+A_2x^{-1}-(B_2-B_1/2)\log x+C_2+D_1, 
\end{align*}
Inserting it back into \eqref{eq1}, it can be quickly verified that $B_1=B_2=B$. 
\end{proof}
We are now ready to state and solve the functional equation related to the Matsumoto--Yor property on symmetric cones. 

\begin{theorem}\label{th1}
Let $a$, $b$, $c$ and $d$ be continuous real functions defined on $\VV$ such that
\begin{align}\label{eq2}
a(\xx)+b(\yy)=c\left((\xx+\yy)^{-1}\right)+d\left(\xx^{-1}-(\xx+\yy)^{-1}\right),\quad (\xx,\yy)\in\VV^2.
\end{align}
Then there exist constants $q\in\RR$, $\ff$, $\gb\in\En$ and $\gamma_i\in\RR$, $i=1, 2, 3$, such that for any $\xx\in\VV$, 
\begin{align*}
a(\xx) & =q\log \det\xx+\scalar{\ff,\xx}+\scalar{\gb,\xx^{-1}}+\gamma_1+\gamma_3,\\
b(\xx) & =-q\log \det\xx+\scalar{\ff,\xx}+\gamma_2,\\
c(\xx) & =q\log \det\xx+\scalar{\gb,\xx}+\scalar{\ff,\xx^{-1}}+\gamma_3,\\
d(\xx) & =-q\log \det\xx+\scalar{\gb,\xx}+\gamma_1+\gamma_2.
\end{align*}
\end{theorem}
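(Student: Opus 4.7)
The overall approach is to first reduce \eqref{eq2} to a family of one-dimensional equations along rays in $\VV$, apply Theorem \ref{1dim} to obtain pointwise data, and then promote the ray-wise information to the stated global form by combining Hua's identity \eqref{Hua} with Lemmas \ref{Additive} and \ref{Pexider}.

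\textbf{Step 1 (reduction to rays).} For each $\ab\in\VV$ and $s,t>0$, I would substitute $\xx=s\ab$ and $\yy=t\ab$ into \eqref{eq2}; since $(s\ab+t\ab)^{-1}=(s+t)^{-1}\ab^{-1}$ and $(s\ab)^{-1}-(s\ab+t\ab)^{-1}=(s^{-1}-(s+t)^{-1})\ab^{-1}$, the identity becomes \eqref{eq1} for the ray-restrictions of $a,b,c,d$. Theorem \ref{1dim} then supplies continuous functions $p,f,g,C_1,\ldots,C_4\colon\VV\to\RR$ (with $C_1+C_2=C_3+C_4$) such that
\begin{align*}
a(s\ab)&=-p(\ab)\log s+f(\ab)s+g(\ab)/s+C_1(\ab),\\
b(t\ab)&=p(\ab)\log t+f(\ab)t+C_2(\ab),
\end{align*}
together with the analogous formulas for $c,d$ on the ray $\ab^{-1}$. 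Writing $a((st)\ab)$ in two parameterizations forces the homogeneities $p(t\ab)=p(\ab)$, $f(t\ab)=tf(\ab)$, $g(t\ab)=g(\ab)/t$, and $C_i(t\ab)=C_i(\ab)\pm p(\ab)\log t$, while evaluating at $s=1$ yields the pointwise decompositions $a(\xx)=f(\xx)+g(\xx)+C_1(\xx)$, $b(\xx)=f(\xx)+C_2(\xx)$, $c(\xx)=f(\xx^{-1})+g(\xx^{-1})+C_3(\xx^{-1})$, and $d(\xx)=g(\xx^{-1})+C_4(\xx^{-1})$.

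\textbf{Step 2 (master equation and constancy of $p$).} Substituting these decompositions into \eqref{eq2} and invoking Hua's identity \eqref{Hua} to rewrite $\xx^{-1}-(\xx+\yy)^{-1}=(\xx+\PP(\xx)\yy^{-1})^{-1}$ yields the master equation
\begin{align*}
f(\xx)+g(\xx)+C_1(\xx)+f(\yy)+C_2(\yy) &= f(\xx+\yy)+g(\xx+\yy)+C_3(\xx+\yy)\\
&\quad+g(\xx+\PP(\xx)\yy^{-1})+C_4(\xx+\PP(\xx)\yy^{-1})
\end{align*}
for all $\xx,\yy\in\VV$. Rescaling $\yy\mapsto t\yy$ and comparing the $\log t$-content on both sides (using the explicit ray-wise forms of $f$, $g$, and $C_i$ from Step 1) forces $p$ to be constant on $\VV$, say $p\equiv -qr$ for some $q\in\RR$.

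\textbf{Step 3 (additivity, Pexider, and main obstacle).} With $p$ constant, subtracting suitably rescaled instances of the master equation cancels the $\log t$-contributions and isolates a Cauchy-additivity $f(\xx+\yy)=f(\xx)+f(\yy)$; Lemma \ref{Additive} then yields $f(\xx)=\scalar{\ff,\xx}$ for some $\ff\in\En$. The involutive substitution $\yy\mapsto\PP(\xx)\yy^{-1}$, which by Hua transposes the two nonlinear arguments $\xx+\yy$ and $\xx+\PP(\xx)\yy^{-1}$, produces the analogous additivity for $\xx\mapsto g(\xx^{-1})$, yielding $g(\xx)=\scalar{\gb,\xx^{-1}}$ for some $\gb\in\En$. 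The residual equation for $C_1,\ldots,C_4$, after subtracting the now-known $f$- and $g$-contributions, reduces via a $\PP(\xx^{1/2})$-change of variables to a logarithmic Pexider equation of the form covered by Lemma \ref{Pexider}, which pins each $C_i$ down to $\pm q\log\det+\mathrm{const}$ (signs consistent with $p=-qr$). Assembling via the pointwise decompositions of Step 1 produces the claimed forms of $a,b,c,d$. The crux, and main obstacle, is precisely this Step 3: the master equation couples four unknowns through two distinct nonlinear arguments, and decoupling the additive content (for $f$ and $g\circ(\cdot)^{-1}$) from the $\log\det$ content (for the $C_i$) is delicate; the involutive substitution $\yy\mapsto\PP(\xx)\yy^{-1}$ (provided by Hua's identity) together with the transitivity of the $\PP(\vb^{1/2})$-action on $\VV$ is the key device for reducing the problem to the precise hypotheses of Lemmas \ref{Additive} and \ref{Pexider}.
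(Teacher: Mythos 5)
Your overall route coincides with the paper's: restriction to rays plus Theorem \ref{1dim}, the homogeneity relations, a master equation rewritten via Hua's identity, scaling limits feeding Lemma \ref{Additive}, and a $\PP(\xx^{1/2})$ change of variables feeding Lemma \ref{Pexider}. However, there is one concrete gap at the end. The Pexider-type equation you reach, $C_1(\xx)+C_2(\yy)=C_3(\xx)+C_4(\PP(\xx)\yy^{-1})$, involves $C_1$ and $C_3$ only through their difference, so Lemma \ref{Pexider} determines $C_2$, $C_4$ and $C_1-C_3$, but it cannot ``pin each $C_i$ down'' as you assert: $C_1$ and $C_3$ could a priori both be shifted by the same arbitrary continuous function. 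To identify $C_3$ itself you must return to the unreduced equation $C_1(\xx)+C_2(\yy)=C_3(\xx+\yy)+C_4(\xx+\PP(\xx)\yy^{-1})$, insert the forms already obtained, use $\det(\xx+\PP(\xx)\yy^{-1})=(\det\xx)^2\det(\xx^{-1}+\yy^{-1})$, and exploit the symmetry of $\det(\xx^{-1}+\yy^{-1})$ in $\xx$ and $\yy$ to conclude that $C_3(\xx)+q\log\det\xx$ is constant. This step is genuinely needed and is absent from your argument.

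Two smaller points. Your ``comparison of $\log t$-content'' in Step 2 is not a formal operation: $C_3(\xx+t\yy)$ and $C_4(\xx+t^{-1}\PP(\xx)\yy^{-1})$ are general continuous functions of a $t$-dependent argument, not polynomials in $\log t$. It can be repaired by writing $\xx+t^{-1}\PP(\xx)\yy^{-1}=t^{-1}(t\xx+\PP(\xx)\yy^{-1})$, applying the ray homogeneity of $C_4$, dividing by $\log t$ and letting $t\to 0$, which forces $p(\yy)=p(\PP(\xx)\yy^{-1})$ and hence constancy of $p$ by transitivity of the quadratic representation; this is essentially the paper's argument, except that the paper performs it only after first eliminating $f$ and $g$ (their additivity is obtained by multiplying the rescaled master equation by $\alpha$ and letting $\alpha\to0$, which requires no information about $p$). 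Likewise, the claim that the involution $\yy\mapsto\PP(\xx)\yy^{-1}$ ``produces the analogous additivity'' for $\xx\mapsto g(\xx^{-1})$ is not substantiated: that substitution swaps the two nonlinear arguments but simultaneously turns $f(\yy)+C_2(\yy)$ into $f(\PP(\xx)\yy^{-1})+C_2(\PP(\xx)\yy^{-1})$, so no additivity drops out directly. The clean device is to evaluate the master equation at $(\alpha\xx,\alpha\yy)$, multiply by $\alpha$ and let $\alpha\to0$; only the $(-1)$-homogeneous $g$-terms survive, giving $g(\xx)=g(\xx+\yy)+g\bigl((\xx^{-1}-(\xx+\yy)^{-1})^{-1}\bigr)$, which is additivity of $g\circ(\cdot)^{-1}$ after the change of variables $(\ub,\vb)=\bigl((\xx+\yy)^{-1},\xx^{-1}-(\xx+\yy)^{-1}\bigr)$.
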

\begin{proof}
By inserting $(\xx,\yy)=(\alpha\zz,\beta\zz)$ for $\alpha, \beta>0$ and $\zz\in\VV$ into \eqref{eq2}, we arrive at the equation \eqref{eq1}
with $A(\alpha):=a(\alpha\zz)$, $B(\alpha):=b(\alpha\zz)$, $C(\alpha):=c(\alpha\zz^{-1})$ and $D(\alpha):=d(\alpha\zz^{-1})$.
Functions $A$, $B$, $C$ and $D$ are continuous, so they are locally integrable. Therefore, by Theorem \ref{1dim}, for any $\zz\in\VV$, there exist constants $p(\zz)$, $f(\zz)$, $g(\zz)$ and $C_i(\zz)$, $i=1,\ldots,4$, such that
\begin{align}\label{abcd}\begin{split}
a(\alpha\zz) & =-p(\zz)\log \alpha+f(\zz)\alpha+g(\zz)\alpha^{-1}+C_1(\zz),\\
b(\alpha\zz) & =p(\zz)\log \alpha+f(\zz)\alpha+C_2(\zz),\\
c(\alpha\zz^{-1}) & =-p(\zz)\log \alpha+g(\zz) \alpha+f(\zz) \alpha^{-1}+C_3(\zz),\\
d(\alpha\zz^{-1}) & =p(\zz)\log \alpha+g(\zz)\alpha+C_4(\zz),\\
C_1(\zz)+C_2(\zz)&=C_3(\zz)+C_4(\zz),
\end{split}\end{align}
for any $\alpha>0$ and $\zz\in\VV$. Functions $\zz\mapsto p(\zz)$, $\zz\mapsto f(\zz)$, $\zz\mapsto g(\zz)$ and $\zz\mapsto C_i(\zz)$, $i=1,\ldots,4$, are continuous, because $a$, $b$, $c$ and $d$ are continuous. Let $\beta>0$. By the equality $a(\alpha(\beta\zz))=a((\alpha\beta)\zz)$, we obtain that for any $\alpha>0$,
\begin{align*}
a(\alpha\beta\zz)=&-p(\zz)\log \alpha\beta+f(\zz)\alpha\beta+g(\zz)\alpha^{-1}\beta^{-1}+C_1(\zz) \\
=&-p(\beta\zz)\log \alpha+f(\beta\zz)\alpha+g(\beta\zz)\alpha^{-1}+C_1(\beta\zz),
\end{align*}
hence
\begin{align}\label{fgpC}\begin{split}
f(\beta\zz)&=\beta f(\zz),\qquad g(\beta\zz)=\beta^{-1}g(\zz), \\
p(\beta\zz)&=p(\zz), \quad C_1(\beta\zz)=C_1(\zz)-p(\zz)\log\beta.
\end{split}\end{align}
Following the same procedure for functions $b$, $c$ and $d$, we have
\begin{align}\label{Ci}\begin{split}
C_i(\beta\zz)&=C_i(\zz)+p(\zz)\log\beta,\quad i=2,3,\\
C_4(\beta\zz)&=C_4(\zz)-p(\zz)\log\beta.
\end{split}\end{align}
Using \eqref{abcd} for $\alpha=1$ in \eqref{eq2}, we get
\begin{multline}\label{eq4}
f(\xx)+g(\xx)+C_1(\xx)+f(\yy)+C_2(\yy)\\
=g(\xx+\yy)+f(\xx+\yy)+C_3(\xx+\yy)\\
+\,g\left((\xx^{-1}-(\xx+\yy)^{-1})^{-1}\right)+C_4\left((\xx^{-1}-(\xx+\yy)^{-1})^{-1}\right).
\end{multline}
Consider the above equation for $(\alpha^{-1}\xx,\alpha^{-1}\yy)\in\VV^2$, $\alpha>0$. Then, by \eqref{fgpC},
\begin{multline*}
\alpha^{-1} f(\xx)+\alpha g(\xx)+C_1(\alpha^{-1}\xx)+\alpha^{-1} f(\yy)+C_2(\alpha^{-1}\yy) \\
\quad=\alpha g(\xx+\yy)+\alpha^{-1} f(\xx+\yy)+C_3(\alpha^{-1}(\xx+\yy))\\
\quad\quad+\,\alpha g\left((\xx^{-1}-(\xx+\yy)^{-1})^{-1}\right)+C_4\left(\alpha^{-1}(\xx^{-1}-(\xx+\yy)^{-1})^{-1}\right).
\end{multline*}
Multiplying both sides of the above equation by $\alpha$ and passing to the limit as $\alpha\to0$, by \eqref{Ci}, we obtain
\begin{multline*}
f(\xx)+ f(\yy)-f(\xx+\yy)=\\
\quad=\lim_{\alpha\to0}\alpha\left\{C_3(\alpha^{-1}(\xx+\yy))+C_4\left(\alpha^{-1}(\xx^{-1}-(\xx+\yy)^{-1})^{-1}\right)\right.\\
\qquad \quad \quad \quad \quad \left.-C_1(\alpha^{-1}\xx)-C_2(\alpha^{-1}\yy)\right\}.
\end{multline*}
By \eqref{fgpC} and \eqref{Ci}, the limit on the right-hand side of the above equation equals $0$. Thus, by Lemma \ref{Additive}, there exists $\ff\in\En$ such that $f(\xx)=\scalar{\ff,\xx}$. Analogously, consider \eqref{eq4} for $(\alpha\xx,\alpha\yy)\in\VV^2$, $\alpha>0$, multiply its both sides by $\alpha$ and pass to the limit as $\alpha\to0$.
Then
\begin{multline*}
g(\xx)-g(\xx+\yy)-g\left((\xx^{-1}-(\xx+\yy)^{-1})^{-1}\right) \\
=\lim_{\alpha\to0} \alpha \left\{ C_3(\alpha(\xx+\yy))+ C_4\left(\alpha(\xx^{-1}-(\xx+\yy)^{-1})^{-1}\right)\right.\\
\qquad \quad \quad \quad \quad \left.-C_1(\alpha\xx)-C_2(\alpha\yy)\right\}=0.
\end{multline*}
Define $\bar{g}(\xx)=g(\xx^{-1})$. Then,
\begin{align*}
\bar{g}(\xx^{-1})=\bar{g}((\xx+\yy)^{-1})+\bar{g}(\xx^{-1}-(\xx+\yy)^{-1}).
\end{align*}
Thus, $\bar{g}$ is additive, i.e., there exists $\gb\in\En$ such that $g(\xx)=\scalar{\gb,\xx^{-1}}$.

By the use of above results for $f$ and $g$, \eqref{eq4} simplifies to
\begin{align}\label{newMain}
C_1(\xx)+C_2(\yy)=C_3(\xx+\yy)+C_4\left((\xx^{-1}-(\xx+\yy)^{-1})^{-1}\right).
\end{align}
Recall that by Hua's identity \eqref{Hua}, the argument of $C_4$ above may be written as
$$(\xx^{-1}-(\xx+\yy)^{-1})^{-1}=\xx+\PP(\xx)\yy^{-1}.$$
Using this fact along with \eqref{Ci} in \eqref{newMain} for $\yy=\alpha\zz$, we obtain
\begin{multline*}
C_1(\xx)+C_2(\zz)+p(\zz)\log\alpha\\
\quad =C_1(\xx)+C_2(\alpha\zz)=C_3(\xx+\alpha\zz)+C_4\left(\alpha^{-1}(\alpha\xx+\PP(\xx)\zz^{-1})\right)\\
\quad =C_3(\xx+\alpha\zz)+C_4(\alpha\xx+\PP(\xx)\zz^{-1})+p(\alpha\xx+\PP(\xx)\zz^{-1})\log\alpha.
\end{multline*}
Passing to the limit as $\alpha\to0$ (recall that $C_i$ are continuous on $\VV$), we obtain
\begin{align}\label{limCi}
C_1(\xx)+C_2(\zz)-C_3(\xx)-C_4(\PP(\xx)\zz^{-1})=\lim_{\alpha\to0}\log\alpha\left\{p\left(\alpha\xx+\PP(\xx)\zz^{-1}\right)-p(\zz)\right\}
\end{align}
for any $(\xx,\zz)\in\VV^2$. A necessary condition for the limit on the right-hand side to exist is 
$$\lim_{\alpha\to0}\left\{p(\alpha\xx+\PP(\xx)\zz^{-1})-p(\zz)\right\}=0.$$
But $p$ is continuous and $\lim_{\alpha\to0}p(\alpha\xx+\PP(\xx)\zz^{-1})=p(\PP(\xx)\zz^{-1})$, hence $p(\zz)=p(\PP(\xx)\zz^{-1})$. Thus, function $p$ is constant and the right-hand side of \eqref{limCi} is equal to $0$.
Hence, substituting $\zz=\yy^{-1}$ and $\xx\mapsto\xx^{1/2}$ in \eqref{limCi}, we get
$$C_1(\xx^{1/2})-C_3(\xx^{1/2})+C_2(\yy^{-1})=C_4(\PP(\xx^{1/2})\yy).$$
Define $f_1(\xx):=C_1(\xx^{1/2})-C_3(\xx^{1/2})$, $f_2(\xx):=C_2(\xx^{-1})$ and $f_3(\xx):= C_4(\xx)$ for $\xx\in\VV$. 
Then 
$$f_1(\xx)+f_2(\yy)=f_3(\PP(\xx^{1/2})\yy),\quad (\xx,\yy)\in\VV^2.$$
By Lemma \ref{Pexider}, there exist real constants $q$, $\gamma_1$ and $\gamma_2$ such that for any $\xx\in\VV$,
\begin{align*}
f_1(\xx)&=q\log\det\xx+\gamma_1,\\
f_2(\xx)&=q\log\det\xx+\gamma_2,\\
f_3(\xx)&=q\log\det\xx+\gamma_1+\gamma_2,
\end{align*}
that is,
\begin{align*}
C_1(\xx)&=C_3(\xx)+2q\log\det\xx+\gamma_1,\\
C_2(\xx)&=-q\log\det\xx+\gamma_2,\\
C_4(\xx)&=q\log\det\xx+\gamma_1+\gamma_2.
\end{align*}

Let us go back to \eqref{newMain} and use the above result. Then
\begin{align*}
C_3(\xx)+2q\log\det\xx-q\log\det\yy =C_3(\xx+\yy)+q\log\det(\xx+\PP(\xx)\yy^{-1}),\quad (\xx,\yy)\in\VV^2.
\end{align*}
Since $\det(\xx+\PP(\xx)\yy^{-1})=\det(\xx^2)\det(\xx^{-1}+\yy^{-1})$, we obtain
\begin{align*}
C_3(\xx)-q\log\det\yy=C_3(\xx+\yy)+q\log\det\left(\xx^{-1}+\yy^{-1}\right).
\end{align*}
One can interchange $\xx$ and $\yy$ on the right-hand side to obtain
\begin{align*}
C_3(\xx)+q\log\det\xx=C_3(\yy)+q\log\det\yy=\mathrm{const}:=\gamma_3,
\end{align*}
that is, $C_3(\xx)=-q\log\det\xx+\gamma_3$, what completes the proof. 
\end{proof}

\section{Main Result}\label{secMY}
In the following section, we prove our main result, which is a converse to the Matsumoto--Yor property in the symmetric cone-variate case. We reduce the smoothness conditions for densities from $C^2$ densities in \citep{WesLe00} and differentiability in \citep{Wes02} to the continuity only. 

\begin{theorem}\label{thP}
Let $X$ and $Y$ be independent random variables in $\VV$ with continuous and strictly positive densities. If the random variables $U=(X+Y)^{-1}$ and $V=X^{-1}-(X+Y)^{-1}$ are independent, then there exists $p>\dim\VV/r-1$, $\ab$ and $\bb$ in $\VV$ such that $X$ and $Y$ follow respective distributions $\mu_{-p,\ab,\bb}$ and $\gamma_{p,\ab}$.
\end{theorem}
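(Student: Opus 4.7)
The plan is to convert the independence of $(U, V)$ into the functional equation \eqref{eq2}, apply Theorem \ref{th1}, and read off the GIG and Wishart distributions. The first ingredient is the Jacobian of the map $T(\xx, \yy) = ((\xx+\yy)^{-1}, \xx^{-1} - (\xx+\yy)^{-1})$, which is a smooth bijection of $\VV \times \VV$ onto itself, in fact an involution by Hua's identity \eqref{Hua}. Decomposing $T$ as $(\xx, \yy) \mapsto (\xx, \xx+\yy) \mapsto (\xx^{-1}, (\xx+\yy)^{-1}) \mapsto ((\xx+\yy)^{-1}, \xx^{-1} - (\xx+\yy)^{-1})$, only the middle step has nontrivial Jacobian, and \eqref{dddet} yields
\[
|\det dT(\xx, \yy)| = (\det\xx)^{-2n}(\det(\xx+\yy))^{-2n}, \qquad n := \dim\VV/r.
\]

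Since $f_X, f_Y$ are continuous and strictly positive, the push-forward joint density $f_{U,V}$ is continuous and strictly positive on $\VV \times \VV$; the independence assumption factorizes it as $f_U(\ub)f_V(\vb)$, and one readily chooses continuous, strictly positive representatives for $f_U, f_V$. Taking logarithms of the change-of-variables identity and absorbing $\log\det(\xx+\yy) = -\log\det((\xx+\yy)^{-1})$ into the $c$-term, this rearranges to exactly equation \eqref{eq2} with
\[
a(\xx) := \log f_X(\xx) + 2n\log\det\xx, \qquad b(\yy) := \log f_Y(\yy),
\]
\[
c(\ub) := \log f_U(\ub) + 2n\log\det\ub, \qquad d(\vb) := \log f_V(\vb),
\]
all four continuous on $\VV$. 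Theorem \ref{th1} then delivers $q \in \RR$ and $\ff, \gb \in \En$ for which
\[
f_X(\xx) \propto (\det\xx)^{q-2n} e^{\scalar{\ff, \xx} + \scalar{\gb, \xx^{-1}}}, \qquad f_Y(\yy) \propto (\det\yy)^{-q} e^{\scalar{\ff, \yy}}.
\]

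It remains to identify these with $\mu_{-p, \ab, \bb}$ and $\gamma_{p, \ab}$. Integrability of $f_Y$ on $\VV$ forces, via the spectral decomposition, $-\ff \in \VV$ (otherwise $\scalar{\ff, \cdot}$ fails to tend to $-\infty$ along some ray in $\VV$) and $-q > n - 1$ (so that $(\det\yy)^{-q}$ is integrable near the boundary of $\VV$); writing $\ab := -\ff$ and $p := n - q$ yields $p > \dim\VV/r - 1$ and $f_Y = \gamma_{p, \ab}$. The analogous decay analysis of $f_X$ as $\xx \to 0$ forces $\gb = -\bb$ with $\bb \in \VV$, and matching $q - 2n = -p - n$ identifies $f_X = \mu_{-p, \ab, \bb}$. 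Beyond Theorem \ref{th1} itself, I expect the main technical point to be performing the Jordan-algebraic Jacobian cleanly (the $\PP$-operators involved do not commute, so \eqref{dddet} must be invoked intrinsically through the decomposition above), after which the algebraic substance of the result is entirely encoded in Theorem \ref{th1}.
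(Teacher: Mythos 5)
Your proposal follows essentially the same route as the paper's proof: compute the Jacobian of the involution $\Psi$ via \eqref{dddet} (the paper evaluates the block determinant directly from $d(\xx^{-1})=-\PP(\xx)^{-1}$, you factor the map into three steps, which is equivalent), take logarithms of the factorized density identity to land exactly on equation \eqref{eq2}, apply Theorem \ref{th1}, and read off the densities; your labelling of $a,b,c,d$ is the mirror image of the paper's, which is immaterial since \eqref{eq2} is symmetric under the involution. One small correction: integrability of $f_Y\propto(\det\yy)^{-q}e^{\scalar{\ff,\yy}}$ forces $-q>-1$ (the Gamma integral $\Gamma_\VV(p)$ converges iff $p>\dim\VV/r-1$, and here $p-\dim\VV/r=-q$), not $-q>\dim\VV/r-1$ as you state; your final bound $p>\dim\VV/r-1$ is correct, but the intermediate condition as written is too strong and would wrongly exclude Wishart parameters $p\in(\dim\VV/r-1,\,2\dim\VV/r-1]$.
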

\begin{proof}
Define the map $\Psi\colon\VV^2\to\VV^2$ by $\Psi(\xx,\yy)=\left((\xx+\yy)^{-1},\xx^{-1}-(\xx+\yy)^{-1}\right)=(\ub,\vb)$. Obviously, $(U,V)=\Psi(X,Y)$.
Function $\Psi$ is a bijection. In order to find the joint density of $(U,V)$ the essential computation is the one involved with finding the Jacobian $J$ of the map $\psi^{-1}$, that is, the determinant of the linear map
\begin{align*}
\begin{pmatrix}
\mathrm{d}\ub\\
\mathrm{d}\vb
\end{pmatrix}
\mapsto
\begin{pmatrix}
\mathrm{d}\xx \\
\mathrm{d}\yy 
\end{pmatrix}
=
\begin{pmatrix}
\mathrm{d}\xx/\mathrm{d}\ub & \mathrm{d}\xx/\mathrm{d}\vb \\
\mathrm{d}\yy/\mathrm{d}\ub & \mathrm{d}\yy/\mathrm{d}\vb
\end{pmatrix}
\begin{pmatrix}
\mathrm{d}\ub\\
\mathrm{d}\vb
\end{pmatrix}.
\end{align*}
It is easy to see that $\Psi=\Psi^{-1}$, that is $(\xx,\yy)=\left((\ub+\vb)^{-1},\ub^{-1}-(\ub+\vb)^{-1}\right)$. Note that the derivative of the map $\xx\mapsto\xx^{-1}$ is $-\PP(\xx)^{-1}$. Thus
\begin{align*}
J & =\left| 
\begin{array}{cc}
-\PP(\ub+\vb)^{-1} & -\PP(\ub+\vb)^{-1} \\
-\PP(\ub)^{-1}+\PP(\ub+\vb)^{-1} & \PP(\ub+\vb)^{-1}
\end{array}
\right| 
=\left| 
\begin{array}{cc}
-\PP(\ub)^{-1} & 0 \\
-\PP(\ub)^{-1}+\PP(\ub+\vb)^{-1} & \PP(\ub+\vb)^{-1}
\end{array}
\right|\\
& =\DDet\left(\PP(\ub+\vb)^{-1}\PP(\ub)^{-1}\right).
\end{align*}
By \eqref{dddet}, we get
\begin{align*}
J=(\det\ub\det(\ub+\vb))^{-2\dim\VV/r}.
\end{align*}
Since $(X,Y)$ and $(U,V)$ have independent components, the following identity holds almost everywhere with respect to the Lebesgue measure: 
\begin{align*}
f_U(\ub)f_V(\vb)=(\det\ub\det(\ub+\vb))^{-2\dim\VV/r} f_X\left((\ub+\vb)^{-1}\right)f_Y\left(\ub^{-1}-(\ub+\vb)^{-1}\right),
\end{align*}
where $f_X$, $f_Y$, $f_U$ and $f_V$ denote densities of $X$, $Y$, $U$ and $V$, respectively.
Since the respective densities are assumed to be continuous, the above equation holds for every $(\ub,\vb)\in\VV^2$. Taking the logarithms of both sides of the above equation (it is permitted since $f_X$, $f_Y>0$ on $\VV$), we get 
\begin{align}\label{lukacs}
a(\ub)+b(\vb)=c\left((\ub+\vb)^{-1}\right)+d\left(\ub^{-1}-(\ub+\vb)^{-1}\right),
\end{align}
where
\begin{align*}
a(\xx)&=\log\, f_U(\xx)+\tfrac{2\dim\VV}{r}\log\det\xx,\\
c(\xx)&=\log\, f_X(\xx)+\tfrac{2\dim\VV}{r}\log\det\xx,\\
b&=\log\, f_V,\qquad d=\log\, f_Y.
\end{align*}
By Theorem \ref{th1}, there exist constants $q\in\RR$, $\ff$, $\gb\in\En$ and $\gamma_i\in\RR$, $i=1, 2, 3$, such that for any $\xx\in\VV$, 
\begin{align*}
c(\xx) & =-q\log \det\xx+\scalar{\gb,\xx}+\scalar{\ff,\xx^{-1}}+\gamma_3,\\
d(\xx) & =q\log \det\xx+\scalar{\gb,\xx}+\gamma_1+\gamma_2,
\end{align*}
that is,
\begin{align*}
f_X(\xx) & = \mathrm{e}^{\gamma_3} (\det\xx)^{-q-2\dim\VV/r} \mathrm{e}^{\scalar{\gb,\xx}+\scalar{\ff,\xx^{-1}}},\\
f_Y(\xx) & = \mathrm{e}^{\gamma_1+\gamma_2}(\det\xx)^q \mathrm{e}^{\scalar{\gb,\xx}}.
\end{align*}
Since $f_X$ and $f_Y$ are some densities, we have $\ab=-\gb\in\VV$, $\bb=-\ff\in\VV$ and $q=p-\dim\VV/r>-1$. Thus, $X\sim\mu_{-p,\ab,\bb}$ and $Y\sim\gamma_{p,\ab}$. 
\end{proof}

\section{Comments}\label{ddd}
Recall that $S_r(\KK)$ denotes the space of $r\times r$ Hermitian matrices valued in $\KK$. Let $\VV_r(\KK)$ be the symmetric cone of Jordan algebra  $\En=S_r(\KK)$, where $\KK$ denotes either the real numbers $\RR$, the complex ones $\mathbb{C}$ or the quaternions $\mathbb{H}$. We exclude here the non-associative case $\KK=\mathbb{O}$.

Let $z$ be a fixed $s\times r$ matrix of full rank valued in $\KK$ and define the linear mapping $\PP_{sr}\colon S_r(\KK)\to S_s(\KK)$ by
$$\PP_{sr}(z)\xx=z\cdot\xx\cdot z^\ast.$$
If $r=s$, then $\PP_{sr}$ is the ordinary quadratic representation of $\Omega_s$. In the rest of the paper, we will drop the subscript and simply write $\PP$ (abusing the notation from previous sections).

Now, consider the following transformation $\psi_z\colon \VV_r(\KK)\times\VV_s(\KK)\to\VV_s(\KK)\times\VV_r(\KK)$, where
$$\psi_z(\xx,\yy)=\left((\PP(z)\xx+\yy)^{-1},\xx^{-1}-\PP(z^\ast)(\PP(z)\xx+\yy)^{-1}\right).$$
It is natural to ask whether an analogue of Theorem \ref{thP} holds if we consider independent random variables $X$ and $Y$ valued in $\VV_r(\KK)$ and $\VV_s(\KK)$ and define $(U,V)=\psi_z(X,Y)$. The answer is affirmative, and it was given in \cite[Theorem 4.1]{MasWes06}. Following the same steps as in the proof of Theorem \ref{thP}, the problem of characterization of probability measures is reduced to the problem of solving following functional equation
	\begin{align}\label{maindiff}
	a(\ub)+b(\vb)=c\left((\PP(z^\ast)\ub+\vb)^{-1}\right)+d\left(\ub^{-1}-\PP(z)(\PP(z^\ast)\ub+\vb)^{-1}\right),\quad (\ub,\vb)\in\VV_s(\KK)\times\VV_r(\KK),
	\end{align}
where $a,d\colon \VV_s(\KK)\to\RR$ and $b,c\colon \VV_r(\KK)\to\RR$ are some unknown functions.
This functional equation was solved by \citet{MasWes06} for $\KK=\RR$ under the assumption that the unknown functions are differentiable. It can be shown that through Theorem \ref{th1}, this assumption may be weakened to continuity. Therefore, we obtain the following refinement of \cite[Theorem 4.1]{MasWes06}:
\begin{theorem}
	Let $X$ and $Y$ be independent random variables with values in $\VV_r(\KK)$ and $\VV_s(\KK)$, respectively. Assume that $X$ and $Y$ have continuous densities, which are strictly positive.
	Define $(U,V)=\psi_z(X,Y)$. 
	
	If $U$ and $V$ are independent, then there exist matrices $(\ab,\bb)\in \VV_s(\KK)\times\VV_r(\KK)$ and a constant $p>\dim\VV_{r}(\KK)/r-1$ such that  $$(X,Y)\sim\mu^{(r)}_{-p,\PP(z^\ast)\ab,\bb}\otimes\gamma^{(s)}_{q,\ab},$$ where $q=p+(\dim\VV_{s}(\KK)/s-\dim\VV_{r}(\KK)/r)$.
\end{theorem}
The superscripts $^{(s)}$ and $^{(r)}$ are used to emphasize the ranks of the cones on which the distributions are considered.
 
	The solution to \eqref{maindiff} was also used in the proof of the characterization of Wishart distribution through its block conditional independence structure (see \citep[Theorem 5.1]{MasWes06}. One of the technical assumptions was that the respective random matrix has a differentiable density. This was assumed only in order to solve a functional equation, whose solution was not known under weaker assumptions. Therefore, this assumption may be reduced to the existence of continuous densities. 
		
An analogous assumption was imposed on the densities in the recent paper of \citet{Bob15}, where the multivariate MY property on trees is considered -- see \citep[Theorem 4.3]{Bob15}. Thanks to the solution of \eqref{maindiff} under weaker assumptions, this theorem holds true if we assume continuity of densities only.
\subsection*{Acknowledgment} This research was partially supported by NCN Grant No. 2012/05/B/ST1/00554.
\bibliographystyle{plainnat}

\bibliography{Bibl}
\end{document}